\newtheorem{theorem}{Theorem}[section]
\newtheorem{lemma}[theorem]{Lemma}
\newtheorem{proposition}[theorem]{Proposition}
\theoremstyle{definition}
\theoremstyle{remark}
\title{A Deformed Quon Algebra}
\author{Hery Randriamaro \thanks{Mathematisches Forschungsinstitut Oberwolfach \\ Schwarzwaldstraße 9-11, 77709 Oberwolfach, Germany \\ E-mail: \texttt{hery.randriamaro@outlook.com} \\ This research was supported through the programme "Oberwolfach Leibniz Fellows" by the Mathematisches Forschungsinstitut Oberwolfach in 2017}}
\begin{document}

\maketitle

\begin{abstract}
\noindent The quon algebra is an approach to particle statistics in order to provide a theory in which the Pauli exclusion principle and Bose statistics are violated by a small amount. The quons are particles whose annihilation and creation operators obey the quon algebra which interpolates between fermions and bosons. In this paper we generalize these models by introducing a deformation of the quon algebra generated by a collection of operators $a_{i,k}$, $(i,k) \in \mathbb{N}^* \times [m]$, on an infinite dimensional vector space satisfying the deformed $q$-mutator relations $a_{j,l} a_{i,k}^{\dag} = q a_{i,k}^{\dag} a_{j,l} + q^{\beta_{-k,l}} \delta_{i,j}$. We prove the realizability of our model by showing that, for suitable values of $q$, the vector space generated by the particle states obtained by applying combinations of $a_{i,k}$'s and $a_{i,k}^{\dag}$'s to a vacuum state $|0\rangle$ is a Hilbert space. The proof particularly needs the investigation of the new statistic $\mathtt{cinv}$ and representations of the colored permutation group.

\bigskip 

\noindent \textsl{Keywords}: Quon Algebra, Infinite Statistics, Hilbert Space, Colored Permutation Group 

\smallskip

\noindent \textsl{MSC Number}: 05E15, 81R10, 15A15 
\end{abstract}

\section{Introduction}

\noindent Let $\mathbb{R}(q)$ be the fraction field of the real polynomials with variable $q$. By a deformed quon algebra $\mathbf{A}$, we mean the free algebra $\mathbb{R}(q)\big[a_{i,k}\ |\ (i,k) \in \mathbb{N}^* \times [m]\big]$ subject to the anti-involution $\dag$ exchanging $a_{i,k}$ with $a_{i,k}^{\dag}$, and to the commutation relation   
$$a_{j,l} a_{i,k}^{\dag} = q a_{i,k}^{\dag} a_{j,l} + q^{\beta_{-k,l}} \delta_{i,j},$$
where $\delta_{i,j}$ is the Kronecker delta and
$$\beta_{-k,l} = \left\{ \begin{array}{rl} 0 & \text{if } l-k \equiv m \mod m\\
1 & \text{otherwise} \end{array} \right..$$

\smallskip

\noindent This algebra is a generalization of the quon algebra introduced by Greenberg \cite{Grb2}, subject to the commutation relation $a_j a_i^{\dag} = q a_i^{\dag} a_j + \delta_{i,j}$ obeyed by the annihilation and creation operators of the quon particles, and generating a model of infinite statistics. Moreover, the quon algebra is a generalization of the classical Bose and Fermi algebras corresponding to the restrictions $q=1$ and $q=-1$ respectively, as well as of the intermediate case $q=0$ suggested by Hegstrom and investigated by Greenberg \cite{Grb1}.

\smallskip

\noindent In a Fock-like representation, the generators of $\mathbf{A}$ are the linear operators $a_{i,k}, a_{i,k}^{\dag}: \mathbf{V} \rightarrow \mathbf{V}$ on an infinite dimensional real vector space $\mathbf{V}$ satisfying the commutation relations
$$a_{j,l} a_{i,k}^{\dag} - q a_{i,k}^{\dag} a_{j,l} = q^{\beta_{-k,l}} \delta_{i,j},$$ 
and the relations $$a_{i,k}|0\rangle = 0,$$
where $a_{i,k}^{\dag}$ is the adjoint of $a_{i,k}$, and $|0\rangle$ is a nonzero distinguished vector of $\mathbf{V}$. The $a_{i,k}$'s are the annihilation operators and the $a_{i,k}^{\dag}$'s the creation operators.\\
Let $\mathbf{H}$ be the vector subspace of $\mathbf{V}$ generated by the particle states obtained by applying combinations of $a_{i,k}$'s and $a_{i,k}^{\dag}$'s to $|0\rangle$, or $$\mathbf{H} := \big\{a |0\rangle\ |\ a \in \mathbf{A}\big\}.$$

\noindent The aim of this article is to prove the realizability of this model through the following theorem.

\begin{theorem} \label{ThMa}
$\mathbf{H}$ is a Hilbert space for the bilinear form $(.,.): \mathbf{H} \times \mathbf{H} \rightarrow \mathbb{R}(q)$ defined by
$$\big(a|0 \rangle, b|0 \rangle\big) := \langle 0| a^{\dag} \, b |0 \rangle  \quad \text{with} \quad \langle 0|0 \rangle = 1,$$
and for $$-1 < q < 1\ \text{if}\ m=1 \quad \text{and} \quad \frac{1}{1-m} < q < 1\ \text{if}\ m>1.$$
\end{theorem}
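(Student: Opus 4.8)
The plan is to prove that the bilinear form $(\cdot,\cdot)$ is positive definite for every $q$ in the stated interval; the Hilbert-space structure then follows by completing $\mathbf{H}$ in the induced norm. First I would observe that the form respects the grading of $\mathbf{H}$ by particle number: a bra carrying $p$ annihilation operators and a ket carrying $p'$ creation operators pair to zero unless $p=p'$, since any unmatched $a_{i,k}$ annihilates $|0\rangle$ on one side or the other. Hence it suffices to establish positive definiteness on each $n$-particle sector $\mathbf{H}_n$, spanned by the states $a_{i_1,k_1}^{\dag}\cdots a_{i_n,k_n}^{\dag}|0\rangle$. A further reduction comes from the base indices: because the factor $\delta_{i,j}$ forces an annihilator and a creator to share their $\mathbb{N}^*$-label before they can contract, the sector in which all base indices are distinct is the generic one, and the sectors with repeated indices arise from it by a symmetrization, i.e. by compression with a projection, so their positivity follows once the distinct case is settled.

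For $n$ particles with distinct base indices but arbitrary colors, the states are indexed by the colored permutation group $\mathbb{Z}_m \wr S_n$, and the task becomes showing that the Gram matrix $M_n(q)$ is positive definite. I would compute its entries by repeatedly applying the commutation relation to push every annihilation operator to the right past the creation operators: each step either contracts a matched pair, producing $q^{\beta_{-k,l}}\delta_{i,j}$, or commutes past an unmatched creator, producing a factor of $q$. Collecting exponents over the surviving sequence of contractions should yield
$$\big(M_n(q)\big)_{\sigma,\tau} = q^{\,\mathtt{cinv}(\sigma^{-1}\tau)},$$
where $\mathtt{cinv}$ is the colored-inversion statistic on $\mathbb{Z}_m \wr S_n$; checking that the accumulated powers of $q$ are counted exactly by $\mathtt{cinv}$ is the combinatorial heart of this step, and the reason the statistic must be investigated on its own. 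In particular, $M_n(q)$ is the matrix of convolution by the function $g\mapsto q^{\mathtt{cinv}(g)}$ in the real group algebra of $\mathbb{Z}_m \wr S_n$.

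The sharp range of $q$ is already dictated by the one-particle sector. There the Gram matrix is $(1-q)I + qJ$ with $J$ the all-ones $m\times m$ matrix, whose eigenvalues are $1-q$ with multiplicity $m-1$ and $1+(m-1)q$ with multiplicity one; requiring both to be positive gives precisely $\frac{1}{1-m}<q<1$ for $m>1$, and $-1<q<1$ in the degenerate case $m=1$, matching the theorem. The main obstacle is to propagate this into positive definiteness of $M_n(q)$ for all $n$ on the same interval. I would attack this by Fourier analysis on the colored permutation group: since $M_n(q)$ is a convolution operator, it is positive definite if and only if each image $\sum_g q^{\mathtt{cinv}(g)}\rho(g)$ under the irreducible representations $\rho$ of $\mathbb{Z}_m \wr S_n$ is positive definite. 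Using that these irreducibles are indexed by $m$-tuples of partitions and built inductively from the symmetric groups and the characters of $\mathbb{Z}_m$, combined with a Zagier--Bozejko--Speicher-type recursive factorization of $M_n$ in terms of $M_{n-1}$ and an explicit one-step operator assembled from $\mathtt{cinv}$, one should reduce the positivity of every block to the one-particle computation and a single bounded geometric-series operator. This recursion, together with the bookkeeping that shows the critical threshold remains $\frac{1}{1-m}$ at every level rather than drifting inward, is where I expect the real difficulty to lie.

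Finally, once each $M_n(q)$ is known to be positive definite for $q$ in the stated interval, the form $(\cdot,\cdot)$ is a genuine positive-definite inner product on the graded space $\mathbf{H}=\bigoplus_n \mathbf{H}_n$, and taking the completion with respect to the induced norm yields the Hilbert space asserted in the theorem.
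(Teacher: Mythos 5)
Your reductions are sound, and they coincide with the paper's own setup: the orthogonality between sectors with different particle numbers and different multisets of base indices is the paper's Lemma 3.2, the identification of the distinct-index Gram matrix entries as $q^{\mathtt{cinv}(\theta^{-1}\vartheta)}$ is Lemma 3.3, the handling of repeated indices by compression is equivalent to the paper's observation that $\mathbf{M}_I$ is the representation of $\sum_{\pi} q^{\mathtt{cinv}\,\pi}\,\pi$ on a module all of whose irreducible constituents occur in the regular representation, and your one-particle matrix $(1-q)I+qJ$ reproduces the factor $\big(1+(m-1)q\big)(1-q)^{m-1}$. The genuine gap is the central step: proving that $\mathbf{M}_{[n]}(q)$ is positive definite for \emph{every} $n$ on the whole stated interval. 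You defer this to ``a Zagier--Bozejko--Speicher-type recursive factorization'' that ``should reduce'' positivity to the one-particle case, and you yourself flag that ruling out the threshold ``drifting inward'' with $n$ is where the difficulty lies. No such factorization is exhibited, no precise statement of it is given, and nothing in your argument excludes a new zero of an eigenvalue appearing inside the interval for some $n\geq 2$. That your heuristic cannot be complete as stated is visible already at $m=1$: there the one-particle Gram matrix is the $1\times 1$ identity and imposes no constraint whatsoever, while the true bound $-1<q<1$ is created by the sectors $n\geq 2$ (the factors $1-q^{i^2+i}$); so the range is not ``dictated by the one-particle sector,'' and the $n$-dependence is exactly what must be controlled.

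The paper closes this step by a different and entirely explicit mechanism, which you could substitute for your projected recursion. It factors the group algebra element as
$$\sum_{\pi \in \mathbb{U}_m \wr \mathfrak{S}_n} q^{\mathtt{cinv}\,\pi}\,\pi \;=\; \Big(\sum_{\sigma \in \mathfrak{S}_n} q^{\mathtt{cinv}\,\sigma}\,\sigma\Big)\Big(\sum_{\xi \in \mathfrak{C}_n} q^{\mathtt{cinv}\,\xi}\,\xi\Big),$$
where $\mathfrak{C}_n \cong (\mathbb{Z}_m)^n$ is the color subgroup. The determinant of the regular representation of the first factor is Zagier's known formula for the symmetric group; that of the second reduces, via the coset Lemma 2.1, to an $m\times m$ circulant determinant. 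This yields the closed form
$$\det \mathbf{M}_{[n]} = \Big( \big(1 + (m-1)q\big)(1-q)^{m-1} \prod_{i=1}^{n-1}\big(1-q^{i^2+i}\big)^{\frac{n-i}{i^2+i}}\Big)^{m^n n!},$$
which is nonvanishing precisely on the claimed interval; since $\mathbf{M}_{[n]}(0)=I$, continuity of the eigenvalues in $q$ then gives positive definiteness on the entire interval, uniformly in $n$. This route avoids both the irreducible decomposition of $\mathbb{U}_m \wr \mathfrak{S}_n$ and any recursion in $n$, and it is the part of your proposal that would need to be built before it counts as a proof.
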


\noindent Theorem \ref{ThMa} is a generalization of the realizability of the quon algebra model in infinite statistics proved by Zagier \cite[Theorem 1]{Za}.

\smallskip

\noindent To prove Theorem \ref{ThMa}, we begin by showing in Section \ref{SecIP} that $$\mathcal{B} := \big\{a_{i_1, k_1}^{\dag} \dots a_{i_n, k_n}^{\dag}|0\rangle\ |\ (i_u, k_u) \in \mathbb{N}^* \times [m],\, n \in \mathbb{N}\big\}$$ is a basis of $\mathbf{H}$, so that we can assume that $$\mathbf{H} = \Big\{ \sum_{i=1}^n \lambda_i b_i\ |\ n \in \mathbb{N}^*,\, \lambda_i \in \mathbb{R}(q),\, b_i \in \mathcal{B}\Big\}.$$

\noindent Denote by $\mathbb{U}_m$ the group of all $m^{\text{th}}$ roots of unity, and $\mathfrak{S}_n$ the permutation group on $[n]$. We represent an element $\pi$ of the colored permutation group of $m$ colors $\mathbb{U}_m \wr \mathfrak{S}_n$ by $$\pi = \left( \begin{array}{cccc} 1 & 2 & \dots & n\\
\big(\sigma(1), k_1\big) & \big(\sigma(2), k_2\big) & \dots & \big(\sigma(n), k_n\big) \end{array} \right),$$
where $k_1, \dots, k_n \in [m]$, and $\sigma$ is a permutation of $[n]$. But we also adopt the notation
$\pi = (\sigma, \alpha)$ meaning that $\sigma \in \mathfrak{S}_n$ and $\alpha:[n] \rightarrow [m]$ such that $$\forall i \in [n],\,\pi(i) = \big(\sigma(i), \alpha(i)\big).$$

\noindent More generally, let $I$ be a multiset of $n$ elements in $\mathbb{N}^*$, and $\mathfrak{S}_I$ its permutation set. An element $\theta$ of the colored permutation set $\mathbb{U}_m \wr \mathfrak{S}_I$ is defined by $\theta := (\varphi, \epsilon)$ meaning that $\varphi \in \mathfrak{S}_I$ and $\epsilon:[n] \rightarrow [m]$ such that
$$\forall i \in [n],\,\theta(i) = \big(\varphi(i), \epsilon(i)\big).$$

\noindent Denote the infinite matrix associated to the bilinear form in Theorem \ref{ThMa} by
$$\mathbf{M} := \big((f,g)\big)_{f,g \in \mathcal{B}}.$$

\noindent Let $\begin{bmatrix} \mathbb{N}^* \\ n\end{bmatrix}$ be the set of multisets of $n$ elements in $\mathbb{N}^*$. We also prove in Section \ref{SecIP} that
$$\mathbf{M} = \bigoplus_{n \in \mathbb{N}} \bigoplus_{I \in \begin{bmatrix} \mathbb{N}^* \\ n\end{bmatrix}} \mathbf{M}_I \quad \text{with} \quad \mathbf{M}_I = \Big( \langle 0|\, a_{\vartheta(n)} \dots a_{\vartheta(1)} \, a_{\theta(1)}^{\dag} \dots a_{\theta(n)}^{\dag} \,|0 \rangle \Big)_{\vartheta, \theta \in \mathbb{U}_m \wr \mathfrak{S}_I}.$$

\noindent For $m=3$ for example, we have
$$\mathbf{M}_{[2]} = \left( \begin{array}{cccccccccccccccccc}
1 & q & q & q & q^2 & q^2 & q & q^2 & q^2 & q & q^2 & q^2 & q^2 & q^3 & q^3 & q^2 & q^3 & q^3 \\
q & 1 & q & q^2 & q & q^2 & q^2 & q & q^2 & q^2 & q & q^2 & q^3 & q^2 & q^3 & q^3 & q^2 & q^3 \\
q & q & 1 & q^2 & q^2 & q & q^2 & q^2 & q & q^2 & q^2 & q & q^3 & q^3 & q^2 & q^3 & q^3 & q^2 \\
q & q^2 & q^2 & 1 & q & q & q & q^2 & q^2 & q^2 & q^3 & q^3 & q & q^2 & q^2 & q^2 & q^3 & q^3 \\
q^2 & q & q^2 & q & 1 & q & q^2 & q & q^2 & q^3 & q^2 & q^3 & q^2 & q & q^2 & q^3 & q^2 & q^3 \\
q^2 & q^2 & q & q & q & 1 & q^2 & q^2 & q & q^3 & q^3 & q^2 & q^2 & q^2 & q & q^3 & q^3 & q^2 \\
q & q^2 & q^2 & q & q^2 & q^2 & 1 & q & q & q^2 & q^3 & q^3 & q^2 & q^3 & q^3 & q & q^2 & q^2 \\
q^2 & q & q^2 & q^2 & q & q^2 & q & 1 & q & q^3 & q^2 & q^3 & q^3 & q^2 & q^3 & q^2 & q & q^2 \\
q^2 & q^2 & q & q^2 & q^2 & q & q & q & 1 & q^3 & q^3 & q^2 & q^3 & q^3 & q^2 & q^2 & q^2 & q \\
q & q^2 & q^2 & q^2 & q^3 & q^3 & q^2 & q^3 & q^3 & 1 & q & q & q & q^2 & q^2 & q & q^2 & q^2 \\
q^2 & q & q^2 & q^3 & q^2 & q^3 & q^3 & q^2 & q^3 & q & 1 & q & q^2 & q & q^2 & q^2 & q & q^2 \\
q^2 & q^2 & q & q^3 & q^3 & q^2 & q^3 & q^3 & q^2 & q & q & 1 & q^2 & q^2 & q & q^2 & q^2 & q \\
q^2 & q^3 & q^3 & q & q^2 & q^2 & q^2 & q^3 & q^3 & q & q^2 & q^2 & 1 & q & q & q & q^2 & q^2 \\
q^3 & q^2 & q^3 & q^2 & q & q^2 & q^3 & q^2 & q^3 & q^2 & q & q^2 & q & 1 & q & q^2 & q & q^2 \\
q^3 & q^3 & q^2 & q^2 & q^2 & q & q^3 & q^3 & q^2 & q^2 & q^2 & q & q & q & 1 & q^2 & q^2 & q \\
q^2 & q^3 & q^3 & q^2 & q^3 & q^3 & q & q^2 & q^2 & q & q^2 & q^2 & q & q^2 & q^2 & 1 & q & q \\
q^3 & q^2 & q^3 & q^3 & q^2 & q^3 & q^2 & q & q^2 & q^2 & q & q^2 & q^2 & q & q^2 & q & 1 & q \\
q^3 & q^3 & q^2 & q^3 & q^3 & q^2 & q^2 & q^2 & q & q^2 & q^2 & q & q^2 & q^2 & q & q & q & 1
\end{array} \right).$$

\noindent We need to introduce the statistic $\mathtt{cinv}:\mathbb{U}_m \wr \mathfrak{S}_n \rightarrow \mathbb{N}$ defined by $$\mathtt{cinv}\,(\sigma, \alpha) \ := \ \# \{(i,j) \in [n]^2\ |\ i<j,\, \sigma(i) > \sigma(j)\} \, + \, \# \{i \in [n]\ |\ \alpha(i) \neq m\}.$$

\noindent Still in Section \ref{SecIP}, we prove that $\mathbf{M}_I$ is the representation of $\sum_{\pi \in \mathbb{U}_m \wr \mathfrak{S}_n} q^{\mathtt{cinv}\,\pi} \pi$ on the $\mathbb{U}_m \wr \mathfrak{S}_n$--module $\mathbb{R}[\mathbb{U}_m \wr \mathfrak{S}_I]$. Hence if the regular representation of $\sum_{\pi \in \mathbb{U}_m \wr \mathfrak{S}_n} q^{\mathtt{cinv}\,\pi} \pi$, which is $\mathbf{M}_{[n]}$, is positive definite, then $\mathbf{M}_I$ is positive definite. 

\smallskip

\noindent We prove in Section \ref{SecD} that $$\det \mathbf{M}_{[n]} = \Big( \big(1 + (m-1)q\big) (1 - q)^{m-1} \prod_{i=1}^{n-1} (1- q^{i^2+i})^{\frac{(n-i)}{(i^2+i)}} \Big)^{m^nn!}.$$
We particularly can infer that $\mathbf{M}_{[n]}$ is nonsingular for
$$-1 < q < 1\ \text{if}\ m=1 \quad \text{and} \quad \frac{1}{1-m} < q < 1\ \text{if}\ m>1.$$
Since $\mathbf{M}_{[n]}$ is the identity matrix of order $m^nn!$ if $q=0$, we deduce by continuity that $\mathbf{M}_{[n]}$ is positive definite for the values of $q$ mentioned above. For these suitable values of $q$, $\mathbf{M}$ is then a symmetric positive definite matrix or, in other terms, the bilinear form of Theorem~\ref{ThMa} is an inner product on $\mathbf{H}$. 

\smallskip

\noindent But before investigating the deformed quon algebra, it is necessary to recall some notions in representation theory and do some computations in Section \ref{SecRT}. 

\smallskip

\noindent We would like to thank Patrick Rabarison for the discussions on quantum statistics.

\section{Representation Theory}  \label{SecRT}

\noindent We recall the useful notions on representation theory of group and do some calculations for the cyclic groups.

\smallskip

\noindent Take a group $G$ and a finite-dimensional vector space $V$ over a field $\mathbb{K}$. Let $g,h \in G$, $a,b \in \mathbb{K}$, and $u,v \in V$. Then $V$ is a $G$-module if there is a multiplication $\cdot$ of elements of $V$ by elements of $G$ such that
\begin{itemize}
\item $u \cdot g \in V$.
\item $(au + bv) \cdot g = a(u \cdot g) + b(v \cdot g)$,
\item $u \cdot (gh) = (u \cdot g) \cdot h$,
\item $u \cdot 1 = u$ where $1$ is the neutral element of $G$. 
\end{itemize}  

\noindent Take an element $x$ in the group algebra $\mathbb{K}[G]$. Suppose that $\{v_1, \dots, v_n\}$ is a basis of $V$, and that $v_j \cdot x = \sum_{i \in [n]} \mu_{i,j} v_i$. Then the representation of $x$ on the $G$-module $V$ is the matrix $$R_V(x) := (\mu_{i,j})_{i,j \in [n]}.$$

\noindent In particular if $x = \sum_{g \in G} \lambda_g g \in \mathbb{K}[G]$ with $\lambda_g \in \mathbb{R}$, then the regular representation of $x$ is $$R_{\mathbb{K}[G]}(x) := \big( \lambda_{h^{-1}g} \big)_{g,h \in G}.$$

\begin{lemma} \label{coset}
Let $G$ be a finite group, $H \leq G$, and $x \in \mathbb{K}[H]$. Then,
$$\det R_{\mathbb{K}[G]}(x) = \big( \det R_{\mathbb{K}[H]}(x) \big)^{|G:H|}.$$
\end{lemma}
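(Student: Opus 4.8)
The plan is to exploit the coset structure of $G$ relative to $H$ in order to block-diagonalize the regular representation $R_{\mathbb{K}[G]}(x)$. Write $x = \sum_{k \in H} \lambda_k k$ with $\lambda_k \in \mathbb{K}$, where the coefficients vanish outside $H$ because $x \in \mathbb{K}[H]$. Recall that $\mathbb{K}[G]$ is a $G$-module under right multiplication, so the relevant computation is the action $g \cdot x$ on a basis element $g \in G$. Since $x$ is supported on $H$, I would first observe that $g \cdot x = \sum_{k \in H} \lambda_k (gk)$ remains a linear combination of elements of the left coset $gH$. Hence right multiplication by $x$ preserves each left-coset subspace $\mathbb{K}[gH] \subseteq \mathbb{K}[G]$, and $\mathbb{K}[G] = \bigoplus_{gH} \mathbb{K}[gH]$ decomposes into $|G:H|$ invariant subspaces.

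Next I would compute the matrix of the restricted action on a single block. Fixing a coset representative $g$ and using the basis $\{gj \mid j \in H\}$ of $\mathbb{K}[gH]$, the substitution $(gk) \cdot x = \sum_{j \in H} \lambda_{k^{-1}j}\,(gj)$ shows that the block has entries $\lambda_{k^{-1}j}$. This is precisely the matrix $R_{\mathbb{K}[H]}(x) = (\lambda_{k^{-1}j})_{j,k \in H}$ obtained by restricting the same computation to $H$ itself. Thus every one of the $|G:H|$ diagonal blocks equals $R_{\mathbb{K}[H]}(x)$ under the relabelling $gj \leftrightarrow j$, and in particular each has determinant $\det R_{\mathbb{K}[H]}(x)$.

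Finally, reordering the standard basis of $\mathbb{K}[G]$ so that the elements of each left coset are grouped consecutively turns $R_{\mathbb{K}[G]}(x)$ into a block-diagonal matrix with $|G:H|$ identical blocks; such a reordering is a simultaneous permutation of rows and columns and therefore leaves the determinant unchanged. Multiplicativity of the determinant over block-diagonal matrices then gives $\det R_{\mathbb{K}[G]}(x) = \big(\det R_{\mathbb{K}[H]}(x)\big)^{|G:H|}$, as required.

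I expect the only delicate point to be bookkeeping of conventions: making sure that, because $x$ acts on the right and lies in $\mathbb{K}[H]$, it is the left cosets $gH$ (rather than the right cosets) that are preserved, and checking that the resulting block is genuinely independent of the choice of coset representative — different representatives yield permutation-similar blocks and hence equal determinants. Everything else is routine linear algebra.
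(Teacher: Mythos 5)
Your proof is correct and follows essentially the same route as the paper: the paper likewise orders the basis of $\mathbb{K}[G]$ by left cosets of $H$ and observes that $R_{\mathbb{K}[G]}(x) = R_{\mathbb{K}[H]}(x) \otimes I_{|G:H|}$, which is exactly the block-diagonal decomposition you spell out. Your version just makes explicit the details the paper leaves implicit (the invariance of each coset subspace, the computation of the block entries $\lambda_{k^{-1}j}$, and the independence of the choice of representatives).
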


\begin{proof}
Let $H = \{h_1, \dots, h_r\}$, and $\{g_1, \dots, g_k\}$ be a left coset representative set of $H$. On the ordered basis $(g_1 h_1, \dots, g_1 h_r, g_2 h_1, \dots, g_2 h_r, \dots, g_k h_1, \dots, g_k h_r)$ of $\mathbb{K}[G]$, we have $$R_{\mathbb{K}[G]}(x) = R_{\mathbb{K}[H]}(x) \otimes I_{|G:H|},$$
where $I_{|G:H|}$ is the unit matrix of size $|G:H|$.
\end{proof}

\noindent Now consider the cyclic group $Z_m$ of order $m$ generated by $\gamma$, and take a variable $z$. We need the following equalities on the group algebra $\mathbb{R}(z)[Z_m]$.

\begin{lemma} \label{detcy}
We have $$\det R_{\mathbb{R}(z)[Z_m]}\big( 1 + z \sum_{k \in [m-1]} \gamma^k \big) = \big(1 + (m-1)z\big)(1-z)^{m-1}.$$
\end{lemma}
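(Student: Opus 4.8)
The plan is to make the matrix $R_{\mathbb{R}(z)[Z_m]}(x)$ of $x = 1 + z\sum_{k\in[m-1]}\gamma^k$ explicit and then diagonalize it. Using the formula $R_{\mathbb{K}[G]}(x) = (\lambda_{h^{-1}g})_{g,h}$ recalled above, with $\lambda_1 = 1$ and $\lambda_{\gamma^k} = z$ for $k \in [m-1]$, the $(\gamma^a, \gamma^b)$-entry equals $\lambda_{\gamma^{a-b}}$. Hence every diagonal entry is $1$ and every off-diagonal entry is $z$; equivalently, writing $I_m$ for the identity and $J_m$ for the all-ones matrix of order $m$, the matrix is $(1-z)I_m + zJ_m$.

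Next I would compute the spectrum. Since $Z_m$ is abelian, the regular representation splits as the direct sum of its $m$ characters $\chi_k\colon \gamma \mapsto \omega^k$ for $0\le k\le m-1$, where $\omega$ is a primitive $m$-th root of unity, so the eigenvalues are the scalars $\chi_k(x) = 1 + z\sum_{j=1}^{m-1}\omega^{jk}$. Equivalently, one can read the spectrum directly off $(1-z)I_m + zJ_m$ from the fact that $J_m$ has eigenvalue $m$ with multiplicity $1$ and eigenvalue $0$ with multiplicity $m-1$.

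Applying the elementary identity $\sum_{j=0}^{m-1}\omega^{jk} = m$ when $k=0$ and $0$ otherwise, the eigenvalues are $1 + (m-1)z$ with multiplicity $1$ and $1-z$ with multiplicity $m-1$. The determinant, being the product of the eigenvalues, is therefore $\big(1+(m-1)z\big)(1-z)^{m-1}$. Although the diagonalization is carried out over a splitting field, the determinant is the product of the eigenvalues and so lands back in $\mathbb{R}(z)$, yielding exactly the stated polynomial.

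I expect no genuine obstacle here, as the matrix is a circulant of a particularly simple (rank-one perturbation) shape. The only point requiring care is isolating the $k=0$ eigenvalue, where the root-of-unity sum equals $m$ rather than $0$; this is what produces the distinguished factor $1+(m-1)z$ and distinguishes it from the $(m-1)$-fold repeated factor $1-z$.
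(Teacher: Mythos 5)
Your proposal is correct and follows essentially the same route as the paper: the paper identifies the matrix as a circulant and evaluates $\prod_{i\in[m]} f(\zeta^i)$, which is exactly the character/eigenvalue diagonalization you perform, including the same key computation that the root-of-unity sum $\sum_{j\in[m-1]}\omega^{jk}$ equals $m-1$ for $k=0$ and $-1$ otherwise. Your additional observation that the matrix is the rank-one perturbation $(1-z)I_m + zJ_m$ is a pleasant shortcut but leads to the identical spectrum, so the arguments coincide in substance.
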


\begin{proof}
The regular representation of $1 + z \sum_{k \in [m-1]} \gamma^k$ is the $m \times m$ circulant matrix with associated polynomial $f(x) = 1 + z\sum_{j \in [m-1]} x^j$. The determinant of this circulant matrix is $\prod_{i \in [m]} f(\zeta^i)$. If $i \in [m-1]$, then
$$\sum_{j \in [m-1]} \zeta^{ij} = \frac{1 - \zeta^i}{1 - \zeta^i} \sum_{j \in [m-1]} \zeta^{ij}  = \frac{\zeta^i - 1}{1 - \zeta^i} = -1.$$ 
Thus $f(1) = 1 + (m-1)z$, and $f(\zeta^i) = 1-z$ for $i \in [m-1]$.
\end{proof}

\begin{lemma} \label{xiinv}
We have $$\Big( 1 + z \sum_{k \in [m-1]} \gamma^k \Big)^{-1} = \frac{1}{\big(1 + (m-1)z\big)(1-z)} \Big(1 +(m-2)z - z \sum_{k \in [m-1]} \gamma^k\Big).$$
\end{lemma}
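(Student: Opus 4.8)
The plan is to verify the claimed identity directly by multiplication in the group algebra, exploiting a single quadratic relation satisfied by the partial sum $S := \sum_{k \in [m-1]} \gamma^k$. Writing $x := 1 + z S$ for the element whose inverse is sought, everything reduces to understanding how $S$ multiplies against itself, since both $x$ and the proposed inverse are affine-linear expressions in $S$.

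First I would record that the full group sum $T := \sum_{k=0}^{m-1} \gamma^k = 1 + S$ is absorbing under the group action: because left multiplication by $\gamma$ merely permutes the summands, $\gamma^j T = T$ for every $j$, and hence $T^2 = m T$. Substituting $T = 1 + S$ gives $(1+S)^2 = m(1+S)$, which rearranges to the key relation $S^2 = (m-2) S + (m-1)$. This single identity encodes all the multiplicative structure needed.

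Next I would multiply $x = 1 + z S$ by the numerator $N := 1 + (m-2) z - z S$ of the proposed inverse and expand. The linear cross terms $-zS + zS$ cancel, every occurrence of $S^2$ is replaced using the relation above, and the surviving terms $(m-2)z^2 S$ and $-(m-2)z^2 S$ cancel as well, leaving a scalar multiple of the identity of $\mathbb{R}(z)[Z_m]$. A short computation yields $x N = 1 + (m-2) z - (m-1) z^2$, which factors exactly as $\bigl(1 + (m-1) z\bigr)(1 - z)$. Dividing by this scalar, a unit of $\mathbb{R}(z)$, gives $x \cdot N \big/ \bigl( (1 + (m-1)z)(1-z) \bigr) = 1$, and since $Z_m$ is abelian the product in the opposite order agrees, so $N$ divided by the scalar is a genuine two-sided inverse.

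There is no serious obstacle; the only point requiring care is the algebraic bookkeeping in the expansion. Conceptually, the reason the formula takes this shape is that $x$ has just two distinct eigenvalues in the character decomposition of $\mathbb{C}[Z_m]$, namely $1 + (m-1)z$ on the trivial character and $1 - z$ on each of the remaining $m-1$ characters; these are exactly the values $f(1)$ and $f(\zeta^i)$ computed in the proof of Lemma~\ref{detcy}. Hence $x$ satisfies the quadratic relation $\bigl(x - (1+(m-1)z)\bigr)\bigl(x - (1-z)\bigr) = 0$, from which $x^{-1} = \bigl( (1+(m-1)z) + (1-z) - x \bigr) \big/ \bigl( (1+(m-1)z)(1-z) \bigr)$, and the numerator simplifies to precisely $N$. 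This gives an independent derivation of the formula and confirms the normalizing scalar.
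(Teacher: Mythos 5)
Your proof is correct. The core verification is essentially the paper's computation run in reverse: the paper posits an ansatz $x + y\sum_{k \in [m-1]}\gamma^k$, expands the product (which implicitly uses the same multiplicative fact you isolate as $S^2 = (m-2)S + (m-1)$), and solves a $2\times 2$ linear system for the coefficients, whereas you take the claimed inverse as given and check that the product collapses to the scalar $1 + (m-2)z - (m-1)z^2 = \bigl(1+(m-1)z\bigr)(1-z)$. Your explicit extraction of the relation $S^2 = (m-2)S + (m-1)$ from $T^2 = mT$ is a cleaner way to organize the bookkeeping than the paper's term-by-term expansion, and your remark about commutativity settles two-sidedness, which the paper leaves implicit. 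Your second derivation, however, is genuinely different and more illuminating: observing that $1 + zS$ has only the two eigenvalues $1+(m-1)z$ (trivial character) and $1-z$ (each nontrivial character), it satisfies the quadratic $\bigl(x - (1+(m-1)z)\bigr)\bigl(x - (1-z)\bigr) = 0$, and inverting from the minimal polynomial produces the formula with no guessing at all. This route explains \emph{why} the inverse is again affine-linear in $S$, unifies this lemma with the eigenvalue computation already done in Lemma~\ref{detcy}, and would generalize immediately to inverting any element of $\mathbb{R}(z)[Z_m]$ supported on $\{1, S\}$; the paper's linear-system approach buys nothing beyond the specific answer.
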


\begin{proof}
The form of $1 + z \sum_{k \in [m-1]} \gamma^k$ gives us the intuition that its inverse has the form $x + y \sum_{k \in [m-1]} \gamma^k$. The calculation
$$\Big(1 + z \sum_{k \in [m-1]} \gamma^k\Big) \cdot \Big(x + y \sum_{k \in [m-1]} \gamma^k\Big) =
x + (m-1)zy + \Big(zx + \big(1+ (m-2)z\big)y\Big) \sum_{k \in [m-1]} \gamma^k$$
confirms the intuition since it leads us to solve the equation system
$$\begin{cases}
x + (m-1)zy = 1 \\
zx + \big(1+ (m-2)z\big)y = 0
\end{cases}$$
to get the inverse of $1 + z \sum_{k \in [m-1]} \gamma^k$. We obtain 
$$x = \frac{1 +(m-2)z}{\big(1 + (m-1)z\big)(1-z)} \quad \text{and} \quad y = - \frac{z}{\big(1 + (m-1)z\big)(1-z)}.$$
\end{proof}

\begin{lemma} \label{gainv}
We have $$(1-z \gamma)^{-1} = \frac{1}{1-z^m} \sum_{i=0}^{m-1} z^i \gamma^i.$$
\end{lemma}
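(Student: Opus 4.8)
The plan is to verify the claimed formula by direct multiplication rather than by any structural argument, since the right-hand side is essentially a truncated geometric series in the group algebra. The key observation is that $Z_m$ is cyclic of order $m$, so $\gamma^m = 1$. First I would form the product of $1 - z\gamma$ with the candidate inverse $\sum_{i=0}^{m-1} z^i \gamma^i$. Distributing and reindexing the shifted sum gives
$$(1 - z\gamma) \sum_{i=0}^{m-1} z^i \gamma^i = \sum_{i=0}^{m-1} z^i \gamma^i - \sum_{i=1}^{m} z^i \gamma^i,$$
where almost every term cancels in this telescoping difference, leaving only $z^0 \gamma^0 - z^m \gamma^m$.

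Next I would invoke $\gamma^m = 1$ to rewrite $z^m \gamma^m = z^m$, so that the product collapses to the scalar $1 - z^m$. Since $1 - z^m$ is a nonzero element of the base field $\mathbb{R}(z)$ (a genuine scalar, not a group-algebra element with support on nontrivial powers of $\gamma$), it is invertible, and dividing by it yields $(1 - z\gamma)\cdot \frac{1}{1 - z^m}\sum_{i=0}^{m-1} z^i \gamma^i = 1$. Because $\mathbb{R}(z)[Z_m]$ is commutative ($Z_m$ being abelian), this one-sided inverse is automatically two-sided, which completes the identification.

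I expect no real obstacle here: the entire content is the geometric-series cancellation, and the result is simply the group-algebra analogue of the scalar identity $(1 - z\gamma)^{-1} = \sum_{i \geq 0} z^i \gamma^i$, truncated by the relation $\gamma^m = 1$. The only point requiring a moment of care is that ``inverse'' must be read in $\mathbb{R}(z)[Z_m]$, and that the invertibility of the prefactor rests on $1 - z^m$ being a unit in $\mathbb{R}(z)$, which is available whenever division is permitted and is tacitly assumed by the statement.
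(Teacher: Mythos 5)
Your proposal is correct and is essentially the paper's own proof: the paper's one-line argument is exactly the identity $(1-z\gamma)(1+z\gamma+\dots+z^{m-1}\gamma^{m-1}) = 1-z^m$, which you verify via the telescoping cancellation and $\gamma^m=1$. Your additional remarks on the invertibility of $1-z^m$ in $\mathbb{R}(z)$ and the two-sidedness of the inverse merely make explicit what the paper leaves tacit.
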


\begin{proof}
It comes from $(1-z \gamma)(1+ z \gamma + \dots + z^{m-1} \gamma^{m-1}) = 1 - z^m$.
\end{proof}

\section{The Bilinear Form $(.,.)$}  \label{SecIP}

\noindent We first show that $\mathbf{H}$ is linearly generated by the particle states obtained by applying combinations of $a_{i,k}^{\dag}$'s to $|0\rangle$. Then we prove that $\mathbf{M} = \bigoplus_{n \in \mathbb{N}} \bigoplus_{I \in \begin{bmatrix} \mathbb{N}^* \\ n\end{bmatrix}} \mathbf{M}_I$, where $\mathbf{M}_I$ is a representation of $\sum_{\pi \in \mathbb{U}_m \wr \mathfrak{S}_n} q^{\mathtt{cinv}\,\pi} \pi$.

\begin{lemma}
The vector space generated by our particle states is $$\mathbf{H} = \Big\{ \sum_{i=1}^n \lambda_i b_i\ |\ n \in \mathbb{N}^*,\, \lambda_i \in \mathbb{R}(q),\, b_i \in \mathcal{B}\Big\}.$$
\end{lemma}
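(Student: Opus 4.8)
The plan is to prove the lemma as the equality $\mathbf{H} = \operatorname{span}(\mathcal{B})$ by establishing the two inclusions separately. The inclusion $\operatorname{span}(\mathcal{B}) \subseteq \mathbf{H}$ is immediate: each $b \in \mathcal{B}$ is of the form $a_{i_1,k_1}^{\dag}\dots a_{i_n,k_n}^{\dag}|0\rangle = a|0\rangle$ with $a \in \mathbf{A}$, and since $a \mapsto a|0\rangle$ is $\mathbb{R}(q)$-linear, any finite combination $\sum_i \lambda_i b_i = \big(\sum_i \lambda_i a_i\big)|0\rangle$ again lies in $\mathbf{H}$. The whole content of the lemma is therefore the reverse inclusion $\mathbf{H} \subseteq \operatorname{span}(\mathcal{B})$, which I would obtain by a normal-ordering argument.

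Every $a \in \mathbf{A}$ is a finite $\mathbb{R}(q)$-linear combination of \emph{words} $w = x_1 \dots x_N$, each letter $x_u$ being some $a_{i,k}$ or some $a_{i,k}^{\dag}$; by linearity it suffices to reduce a single word. The idea is to use the relation $a_{j,l}a_{i,k}^{\dag} = q\,a_{i,k}^{\dag}a_{j,l} + q^{\beta_{-k,l}}\delta_{i,j}$ to carry every annihilation letter to the right of every creation letter. Whenever $w$ contains an annihilation letter immediately followed by a creation letter, that is a factor $a_{j,l}a_{i,k}^{\dag}$, I replace this factor to produce two words: a \emph{swap term} $q\,(\dots a_{i,k}^{\dag}a_{j,l}\dots)$ with the two letters interchanged, and a \emph{contraction term} $q^{\beta_{-k,l}}\delta_{i,j}(\dots)$ equal to $w$ with both letters deleted, the latter vanishing unless $i=j$. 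A short check shows that a word contains no such adjacent annihilation--creation factor precisely when all its creation letters precede all its annihilation letters, so iterating the replacement should express $w$ as a combination of such \emph{normally ordered} words.

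The delicate point, and the step I expect to cost the most care, is the termination of this rewriting. I would control it by the statistic $\operatorname{inv}(w) := \#\{u<v \mid x_u \text{ is an annihilation letter and } x_v \text{ a creation letter}\}$. In the swap term only the relative order of the two interchanged adjacent letters changes, from annihilation-before-creation to creation-before-annihilation, so $\operatorname{inv}$ falls by exactly one; in the (nonzero) contraction term both letters of an inverted pair are removed, so $\operatorname{inv}$ again strictly decreases. Since no relation is ever applied to two letters of the same type, an induction on $\operatorname{inv}(w)$ then shows that the procedure halts and rewrites $w$ as an $\mathbb{R}(q)$-linear combination of normally ordered words.

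It remains to apply a normally ordered word to $|0\rangle$. If the word still contains an annihilation letter, its rightmost letter is such an operator $a_{j,l}$, and $a_{j,l}|0\rangle = 0$ makes the entire term vanish. Otherwise the word is a product $a_{i_1,k_1}^{\dag}\dots a_{i_p,k_p}^{\dag}$ of creation operators (the empty product giving $|0\rangle$), so that applying it to $|0\rangle$ yields exactly an element of $\mathcal{B}$. Consequently $a|0\rangle \in \operatorname{span}(\mathcal{B})$ for every $a \in \mathbf{A}$, giving $\mathbf{H} \subseteq \operatorname{span}(\mathcal{B})$ and hence the stated equality.
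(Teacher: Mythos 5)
Your proof is correct and takes essentially the same approach as the paper: both arguments normal-order each word using the defining relation $a_{j,l}a_{i,k}^{\dag} = q\,a_{i,k}^{\dag}a_{j,l} + q^{\beta_{-k,l}}\delta_{i,j}$ so that every annihilation operator ends up acting directly on $|0\rangle$ and vanishes, leaving a combination of elements of $\mathcal{B}$. The only difference is bookkeeping: the paper commutes a single annihilator past an entire block of creators via one closed-form formula and recurses on the number of annihilators, whereas you perform adjacent swaps and certify termination with an inversion count --- a detail the paper leaves implicit in the phrase ``recursively remove.''
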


\begin{proof}
Let $(j, l) \in \mathbb{N}^* \times [m]$. We have,
\begin{align*}
a_{j,l}\, a_{i_1, k_1}^{\dag} \dots a_{i_r, k_r}^{\dag} =\ & q^r a_{i_1, k_1}^{\dag} \dots a_{i_r, k_r}^{\dag}\, a_{j,l} \\
& + \sum_{\substack{u \in [r] \\ i_u=j}} q^{u-1} q^{\beta_{-k_u,l}} \, a_{i_1, k_1}^{\dag} \dots \widehat{a_{i_u, k_u}^{\dag}} \dots a_{i_r, k_r}^{\dag},
\end{align*}
where the hat over the $u^{\text{th}}$ term of the product indicates that this term is omitted. So
$$a_{j,l}\, a_{i_1, k_1}^{\dag} \dots a_{i_r, k_r}^{\dag}\, |0\rangle  = \sum_{\substack{u \in [r] \\ i_u=j}} q^{u-1} q^{\beta_{-k_u,l}} \, a_{i_1, k_1}^{\dag} \dots \widehat{a_{i_u, k_u}^{\dag}} \dots a_{i_r, k_r}^{\dag} |0\rangle.$$
Thus one can recursively remove every annihilation operator $a_{j,l}$ of an element $a|0\rangle$ of $\mathbf{H}$.
\end{proof}

\begin{lemma} \label{zero}
Let $\big((j_1, l_1), \dots, (j_s, l_s)\big) \in (\mathbb{N}^* \times [m])^s$ and $\big((i_1, k_1), \dots, (i_r, k_r)\big) \in (\mathbb{N}^* \times [m])^r$. If, as multisets, $\{j_1, \dots, j_s\} \neq \{i_1, \dots, i_s\}$, then $\langle 0|\, a_{j_s, l_s} \dots a_{j_1, l_1} \, a_{i_1, k_1}^{\dag} \dots a_{i_r, k_r}^{\dag} \,|0 \rangle = 0$. 
\end{lemma}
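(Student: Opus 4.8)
The plan is to argue by induction on the number $s$ of annihilation operators, peeling off the innermost one, $a_{j_1,l_1}$, by means of the reduction formula established in the previous lemma. (By the anti-involution $\dag$ the expression is symmetric under $(j,l)\leftrightarrow(i,k)$ and $s\leftrightarrow r$, so one could equally induct on $r$; I will induct on $s$.)

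For the base case $s=0$, the hypothesis that the multisets differ forces $r\geq 1$, and then the claim is immediate: since $a_{i_1,k_1}|0\rangle=0$, taking adjoints gives $\langle 0|\,a_{i_1,k_1}^{\dag}=\big(a_{i_1,k_1}|0\rangle\big)^{\dag}=0$, whence the whole bra--ket vanishes. For the inductive step with $s\geq 1$, I would apply the previous lemma to rewrite
$$\langle 0|\, a_{j_s, l_s} \dots a_{j_1, l_1}\, a_{i_1, k_1}^{\dag} \dots a_{i_r, k_r}^{\dag}\,|0\rangle = \sum_{\substack{u \in [r] \\ i_u = j_1}} q^{u-1} q^{\beta_{-k_u, l_1}}\, \langle 0|\, a_{j_s, l_s} \dots a_{j_2, l_2}\, a_{i_1, k_1}^{\dag} \dots \widehat{a_{i_u, k_u}^{\dag}} \dots a_{i_r, k_r}^{\dag}\,|0\rangle,$$
so that every surviving term carries $s-1$ annihilation operators, with annihilation index multiset $\{j_2,\dots,j_s\}$ and creation index multiset $\{i_1,\dots,i_r\}\setminus\{j_1\}$ (one copy of $j_1$ deleted, since $i_u=j_1$).

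The key observation, and the only point demanding care, is that these reduced multisets remain unequal. Indeed, $\{j_1,\dots,j_s\}=\{j_1\}\uplus\{j_2,\dots,j_s\}$ and $\{i_1,\dots,i_r\}=\{j_1\}\uplus\big(\{i_1,\dots,i_r\}\setminus\{j_1\}\big)$; were the two reduced multisets equal, adjoining one copy of $j_1$ to each would force $\{j_1,\dots,j_s\}=\{i_1,\dots,i_r\}$, contradicting the hypothesis. Hence the inductive hypothesis kills every term on the right, and the whole expression vanishes. If no index $u$ satisfies $i_u=j_1$ (in particular if $r=0$), the sum is empty and the conclusion is immediate.

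The main obstacle is essentially bookkeeping: confirming that inequality of the first-coordinate multisets is inherited by the reduced problem after each contraction deletes a matching creation operator. Once that is verified, everything else follows mechanically from the reduction formula and the induction, so I expect no deeper difficulty.
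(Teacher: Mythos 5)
Your proof is correct, but it follows a genuinely different route from the paper's. The paper gives a direct, non-inductive argument with two symmetric cases: it locates the smallest index $v$ such that $j_v$ is unmatched in the multiset $\{i_1,\dots,i_r\}\setminus\{j_1,\dots,j_{v-1}\}$ and asserts a decomposition $a_{j_s,l_s}\cdots a_{j_1,l_1}a_{i_1,k_1}^{\dag}\cdots a_{i_r,k_r}^{\dag} = P\,a_{j_v,l_v}\cdots a_{j_1,l_1} + Q\,a_{j_v,l_v}$, so that the unmatched annihilation operator survives next to $|0\rangle$ and kills the term; symmetrically, an unmatched creation index $i_u$ yields a decomposition $a_{i_1,k_1}^{\dag}\cdots a_{i_u,k_u}^{\dag}P' + a_{i_u,k_u}^{\dag}Q'$ whose creation operator kills $\langle 0|$. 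You instead induct on $s$, peeling off the innermost annihilation operator with the reduction formula of the preceding lemma, and you reduce everything to the single base case where a leftover creation operator hits $\langle 0|$; the only substantive point, which you correctly identify and verify, is that multiset inequality is preserved under cancelling one copy of $j_1$ from both sides. Your version buys rigor and mechanical checkability—every step is an already-established identity, whereas the paper's decomposition into $P$ and $Q$ terms is asserted without proof and is the loosest point of its argument; the paper's version buys brevity and makes the annihilation/creation symmetry (via the anti-involution $\dag$) explicit, which you invoke only in passing. Both arguments are elementary and rest on the same commutation relation, so neither is more general, but yours is the more self-contained of the two.
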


\begin{proof}
Suppose that $v$ is the smallest integer in $[s]$ such that $j_v \notin \{i_1, \dots, i_r\} \setminus \{j_1, \dots, j_{v-1}\}.$ Then
$$a_{j_s, l_s} \dots a_{j_1, l_1} \, a_{i_1, k_1}^{\dag} \dots a_{i_r, k_r}^{\dag} = P\, a_{j_v, l_v} \dots a_{j_1, l_1} + Q \, a_{j_v, l_v}\ \text{with}\ P,Q \in \mathbf{A}.$$
We deduce that $a_{j_s, l_s} \dots a_{j_1, l_1} \, a_{i_1, k_1}^{\dag} \dots a_{i_r, k_r}^{\dag}\, |0\rangle = P\, a_{j_v, l_v} \dots a_{j_1, l_1}\, |0\rangle + Q \, a_{j_v, l_v}\, |0\rangle = 0$.\\
In the same way, suppose that $u$ is the smallest integer in $[r]$ such that $i_u$ does not belong to the multiset $\{j_1, \dots, j_s\} \setminus \{i_1, \dots, i_{u-1}\}.$ Then
$$a_{j_s, l_s} \dots a_{j_1, l_1} \, a_{i_1, k_1}^{\dag} \dots a_{i_r, k_r}^{\dag} = a_{i_1, k_1}^{\dag} \dots a_{i_u, k_u}^{\dag} \, P' + a_{i_u, k_u}^{\dag}\, Q' \ \text{with}\ P',Q' \in \mathbf{A}.$$
And $\langle 0|\, a_{j_s, l_s} \dots a_{j_1, l_1} \, a_{i_1, k_1}^{\dag} \dots a_{i_r, k_r}^{\dag} = \langle 0|\, a_{i_1, k_1}^{\dag} \dots a_{i_u, k_u}^{\dag}\, P' +\langle 0|\, a_{i_u, k_u}^{\dag}\, Q'=0$.
\end{proof}

\noindent We just then need to investigate the product $\langle 0|\, a_{j_n, l_n} \dots a_{j_1, l_1} \, a_{i_1, k_1}^{\dag} \dots a_{i_n, k_n}^{\dag} \,|0 \rangle$, where $(j_1, \dots, j_n)$ is a permutation of $(i_1, \dots, i_n)$. Consider a multiset $I$ of $n$ elements in $\mathbb{N}^*$.

\begin{lemma} \label{CoSym}
Let $\theta, \vartheta \in \mathbb{U}_m \wr \mathfrak{S}_I$. Then,
$$\langle 0|\, a_{\vartheta(n)} \dots a_{\vartheta(1)} \, a_{\theta(1)}^{\dag} \dots a_{\theta(n)}^{\dag} \,|0 \rangle = \sum_{\substack{\pi \in \mathbb{U}_m \wr \mathfrak{S}_n \\ \vartheta = \theta \pi}} q^{\mathtt{cinv}\,\pi}.$$
\end{lemma}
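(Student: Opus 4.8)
The plan is to compute the product $\langle 0|\, a_{\vartheta(n)} \dots a_{\vartheta(1)} \, a_{\theta(1)}^{\dag} \dots a_{\theta(n)}^{\dag} \,|0 \rangle$ by repeatedly commuting each annihilation operator to the right until it reaches the vacuum and dies. The key tool is the commutation relation $a_{j,l} a_{i,k}^{\dag} = q a_{i,k}^{\dag} a_{j,l} + q^{\beta_{-k,l}} \delta_{i,j}$, which I would first repackage (as in the first lemma of Section \ref{SecIP}) into the form
$$
a_{j,l}\, a_{\theta(1)}^{\dag} \dots a_{\theta(n)}^{\dag}\,|0\rangle = \sum_{\substack{u \in [n] \\ \varphi(u)=\varphi'}} q^{u-1}\, q^{\beta_{-\epsilon(u),l}} \, a_{\theta(1)}^{\dag} \dots \widehat{a_{\theta(u)}^{\dag}} \dots a_{\theta(n)}^{\dag}\,|0\rangle,
$$
where $a_{j,l}=a_{\vartheta(1)}$ carries the index $j=\varphi'$ of the underlying permutation $\vartheta=(\varphi,\epsilon)$. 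Because all $j$-values come from the same multiset $I$ on both sides (Lemma \ref{zero} has already discarded the other terms), each annihilation operator finds at least one matching creation operator to contract against.

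First I would set up an induction on $n$. Peeling off the innermost annihilator $a_{\vartheta(1)}$ produces a sum over the positions $u$ where the color-index data match; each term leaves a product of $n-1$ creation operators acting on $|0\rangle$, to which the remaining annihilators $a_{\vartheta(n)}\dots a_{\vartheta(2)}$ are applied. The power of $q$ accumulated in a single contraction is $q^{u-1}$ from moving $a_{\vartheta(1)}$ past the $u-1$ creation operators to its left, times $q^{\beta_{-\epsilon(u),l}}$ from the contraction itself. The combinatorial heart of the argument is to recognize that a complete sequence of such contractions corresponds bijectively to the colored permutations $\pi \in \mathbb{U}_m \wr \mathfrak{S}_n$ satisfying $\vartheta = \theta\pi$: the choice of which creation operator each annihilator contracts with records the permutation $\sigma$ underlying $\pi$, while the color data record $\alpha$. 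I would then check that the total power of $q$ collected along the way is exactly $\mathtt{cinv}\,\pi$, splitting the exponent into the two pieces in the definition of $\mathtt{cinv}$.

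The two contributions must be matched separately. The inversion part $\#\{(i,j): i<j,\ \sigma(i)>\sigma(j)\}$ should come from the accumulated $q^{u-1}$ factors: each time an annihilator hops over a creation operator that will \emph{not} be its partner, it contributes one to an inversion count, and summing these across all contractions reproduces the number of inversions of $\sigma$. The color part $\#\{i: \alpha(i)\neq m\}$ should come from the $q^{\beta_{-\epsilon(u),l}}$ factors: by the definition of $\beta_{-k,l}$, the exponent is $0$ exactly when the colors agree modulo $m$ (i.e. $\alpha(i)=m$ in the normalization of $\pi$) and $1$ otherwise, so each "non-trivial" color mismatch contributes one. The main obstacle, and the step deserving the most care, is precisely this bookkeeping: verifying that the bijection between contraction patterns and $\{\pi : \vartheta=\theta\pi\}$ is correct and that under it the $q$-exponent from the recursion equals $\mathtt{cinv}\,\pi$ on the nose. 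I would verify the bijection by tracking, at each recursive step, how the group relation $\vartheta = \theta\pi$ restricts when one factor of $\theta$ is deleted, and confirm the exponent match by comparing the induction for $n$ against the closed form, using the $m=3$, $I=[2]$ matrix $\mathbf{M}_{[2]}$ displayed in the introduction as a consistency check.
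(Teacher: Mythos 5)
Your plan is correct and is essentially the paper's own proof: the paper likewise peels off annihilators one at a time via the single-annihilator contraction formula, sums over contraction sequences $(u_1,\dots,u_n)$, identifies each sequence-plus-color-data with the colored permutation $\pi=(\sigma,\alpha)$ satisfying $\vartheta=\theta\pi$, and splits the accumulated $q$-exponent into the inversion count of $\sigma$ plus the number of indices with $\alpha(i)\neq m$. Your hop-counting argument for the inversion part (each hop over a not-yet-contracted creator corresponds to exactly one inversion pair) is a valid, and in fact slightly more transparent, justification of the exponent identity that the paper handles by rewriting $q^{u_s-1-\#\{r<s\,|\,u_r<u_s\}}$ as $q^{\#\{r<s\,|\,u_r>u_s\}}$ inside the product.
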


\begin{proof} Let $(j_1, \dots, j_n)$ be a permutation of $(i_1, \dots, i_n)$. Then, 
\begin{align*}
a_{j_n, l_n} \dots a_{j_1, l_1} \, a_{i_1, k_1}^{\dag} \dots a_{i_n, k_n}^{\dag} \,|0 \rangle \ 
& = \sum_{\substack{(u_1, \dots, u_n) \in [n]^n \\ i_{u_1} = j_1,\, \dots \,,\, i_{u_n} = j_n}} \prod_{s \in [n]} q^{u_s - 1 \, - \, \# \big\{r \in [s-1]\ |\ u_r < u_s\big\}} \ q^{\beta_{-k_{u_s},\,l_s}} \,|0 \rangle \\
& = \sum_{\substack{(u_1, \dots, u_n) \in [n]^n \\ i_{u_1} = j_1,\, \dots \,,\, i_{u_n} = j_n}} \prod_{s \in [n]} q^{\# \big\{r \in [s-1]\ |\ u_r > u_s\big\}} \ q^{\beta_{-k_{u_s},\,l_s}} \,|0 \rangle \\
& = \sum_{\substack{(u_1, \dots, u_n) \in [n]^n \\ i_{u_1} = j_1,\, \dots \,,\, i_{u_n} = j_n}} q^{\# \big\{(r,s) \in [n]^2\ |\ r<s,\, u_r > u_s\big\} + \sum_{s \in [n]} \beta_{-k_{u_s},\,l_s}} \,|0 \rangle \\
& = \sum_{\substack{\sigma \in \mathfrak{S}_n \\ \forall s \in [n],\, j_s = i_{\sigma(s)}}} q^{\# \big\{(r,s) \in [n]^2\ |\ r<s,\, \sigma(r) > \sigma(s)\big\} + \sum_{s \in [n]} \beta_{-k_{\sigma(s)},\,l_s}} \,|0 \rangle \\
& = \sum_{\substack{\pi = (\sigma, \alpha) \in \mathbb{U}_m \wr \mathfrak{S}_n \\ \forall s \in [n],\ j_s = i_{\sigma(s)},\ l_s\ \equiv \ k_{\sigma(s)} + \alpha(s) \mod m}} q^{\mathtt{cinv}\,\pi} \,|0 \rangle.
\end{align*}
We obtain the result by remplacing $a_{j_n, l_n} \dots a_{j_1, l_1}$ and $a_{i_1, k_1}^{\dag} \dots a_{i_n, k_n}^{\dag}$ by $a_{\vartheta(n)} \dots a_{\vartheta(1)}$ and $a_{\theta(1)}^{\dag} \dots a_{\theta(n)}^{\dag}$ respectively.
\end{proof}

\noindent For example, take $m=4$, $\vartheta = \left( \begin{array}{ccc} 1 & 2 & 3 \\
(2,4) & (5,1) & (2,4) \end{array} \right)$ and $\theta = \left( \begin{array}{ccc} 1 & 2 & 3 \\
(5,2) & (2,3) & (2,1) \end{array} \right)$. Then
\begin{align*}
\langle 0|\, a_{2,4}\, a_{5,1}\, a_{2,4}\ a_{5,2}^{\dag}\, a_{2,3}^{\dag}\, a_{2,1}^{\dag} \, |0 \rangle\ & = q^{\mathtt{cinv}\left( \begin{array}{ccc} 1 & 2 & 3 \\ (2,1) & (1,3) & (3,3) \end{array} \right)} + 
q^{\mathtt{cinv}\left( \begin{array}{ccc} 1 & 2 & 3 \\ (3,3) & (1,3) & (2,1) \end{array} \right)}\\
& = q^4 + q^5
\end{align*}

\noindent Define the multiplication of an element $\theta = (\varphi, \epsilon)$ of $\mathbb{U}_m \wr \mathfrak{S}_I$ by an element $\pi = (\sigma, \alpha)$ of $\mathbb{U}_m \wr \mathfrak{S}_n$ by
$$\theta \cdot \pi = (\psi, \eta) \in \mathbb{U}_m \wr \mathfrak{S}_I \quad \text{with} \quad \forall i \in [n],\ \psi(i) = \varphi \sigma(i),\ \eta(i) \equiv \epsilon \sigma(i) + \alpha(i) \mod m.$$

\noindent Consider the vector space of linear combinations of colored permutations
$$\mathbb{R}(q)[\mathbb{U}_m \wr \mathfrak{S}_I] := \big\{ \sum_{\theta \in \mathbb{U}_m \wr \mathfrak{S}_I}z_{\theta} \theta \ |\ z_{\theta} \in \mathbb{R}(q) \big\}.$$ 
One can easily check that, relatively to the multiplication $\cdot$, $\mathbb{R}(q)[\mathbb{U}_m \wr \mathfrak{S}_I]$ is a $\mathbb{U}_m \wr \mathfrak{S}_n$--module.

\begin{proposition}
We have $$\mathbf{M}_I = R_{\mathbb{R}(q)[\mathbb{U}_m \wr \mathfrak{S}_I]}\Big( \sum_{\pi \in \mathbb{U}_m \wr \mathfrak{S}_n} q^{\mathtt{cinv}\,\pi} \Big).$$
\end{proposition}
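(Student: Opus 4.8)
The plan is to read off both matrices entry by entry, with Lemma~\ref{CoSym} doing essentially all of the work. First I would recall from Section~\ref{SecRT} the meaning of the representation matrix: for the $\mathbb{U}_m \wr \mathfrak{S}_n$--module $\mathbb{R}(q)[\mathbb{U}_m \wr \mathfrak{S}_I]$ equipped with its natural basis $\{\theta \ |\ \theta \in \mathbb{U}_m \wr \mathfrak{S}_I\}$, the matrix $R_{\mathbb{R}(q)[\mathbb{U}_m \wr \mathfrak{S}_I]}(x)$ of an element $x \in \mathbb{R}(q)[\mathbb{U}_m \wr \mathfrak{S}_n]$ has entries $\mu_{\vartheta,\theta}$ determined by $\theta \cdot x = \sum_{\vartheta} \mu_{\vartheta,\theta}\, \vartheta$, where $\cdot$ is the module action defined just above the proposition.

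Next I would specialize to $x = \sum_{\pi \in \mathbb{U}_m \wr \mathfrak{S}_n} q^{\mathtt{cinv}\,\pi}\, \pi$ and expand using linearity of the module action, namely
$$\theta \cdot x = \sum_{\pi \in \mathbb{U}_m \wr \mathfrak{S}_n} q^{\mathtt{cinv}\,\pi}\, (\theta \cdot \pi).$$
Collecting the coefficient of a fixed basis element $\vartheta$ then gives
$$\mu_{\vartheta,\theta} = \sum_{\substack{\pi \in \mathbb{U}_m \wr \mathfrak{S}_n \\ \theta \cdot \pi = \vartheta}} q^{\mathtt{cinv}\,\pi},$$
so that the $(\vartheta,\theta)$--entry of $R_{\mathbb{R}(q)[\mathbb{U}_m \wr \mathfrak{S}_I]}(x)$ is exactly this sum. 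On the other hand, Lemma~\ref{CoSym} asserts that the $(\vartheta,\theta)$--entry of $\mathbf{M}_I$, which is $\langle 0|\, a_{\vartheta(n)} \dots a_{\vartheta(1)} \, a_{\theta(1)}^{\dag} \dots a_{\theta(n)}^{\dag} \,|0 \rangle$, equals $\sum_{\pi:\, \vartheta = \theta\pi} q^{\mathtt{cinv}\,\pi}$. The two expressions coincide provided the constraint $\theta \cdot \pi = \vartheta$ from the module structure is the same relation as $\vartheta = \theta\pi$ in Lemma~\ref{CoSym}.

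The only real obstacle is therefore this bookkeeping check that the colored-composition conventions agree. Writing $\theta = (\varphi, \epsilon)$ and $\pi = (\sigma, \alpha)$, the multiplication rule yields $\theta \cdot \pi = (\psi, \eta)$ with $\psi(i) = \varphi\sigma(i)$ and $\eta(i) \equiv \epsilon\sigma(i) + \alpha(i) \mod m$; imposing $\theta \cdot \pi = \vartheta$ recovers precisely the index constraints $j_s = i_{\sigma(s)}$ and $l_s \equiv k_{\sigma(s)} + \alpha(s) \mod m$ that appear in the last line of the proof of Lemma~\ref{CoSym}, once one identifies $\vartheta(s) = (j_s, l_s)$ and $\theta(s) = (i_s, k_s)$. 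With this identification in place, the summation indices on both sides range over the same set of colored permutations $\pi$ and weight each by the same power $q^{\mathtt{cinv}\,\pi}$, so $\mu_{\vartheta,\theta}$ equals the $(\vartheta,\theta)$--entry of $\mathbf{M}_I$ for every pair $\vartheta, \theta$, and the proposition follows.
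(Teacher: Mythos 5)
Your proposal is correct and follows essentially the same route as the paper: expand $\theta \cdot \sum_{\pi} q^{\mathtt{cinv}\,\pi}\,\pi$ by linearity, collect the coefficient of each basis element $\vartheta$, and identify it with the entry $\langle 0|\, a_{\vartheta(n)} \dots a_{\vartheta(1)} \, a_{\theta(1)}^{\dag} \dots a_{\theta(n)}^{\dag} \,|0 \rangle$ of $\mathbf{M}_I$ via Lemma~\ref{CoSym}. The only difference is that you make explicit the bookkeeping check that the module action $\theta \cdot \pi$ matches the relation $\vartheta = \theta\pi$ in Lemma~\ref{CoSym}, which the paper leaves implicit.
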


\begin{proof}
Using Lemma \ref{CoSym}, we obtain for $\theta \in \mathbb{U}_m \wr \mathfrak{S}_I$
\begin{align*}
\theta \cdot \sum_{\pi \in \mathbb{U}_m \wr \mathfrak{S}_n} q^{\mathtt{cinv}\,\pi}\ 
& = \sum_{\vartheta \in \mathbb{U}_m \wr \mathfrak{S}_I} \big( \sum_{\substack{\pi \in \mathbb{U}_m \wr \mathfrak{S}_n \\ \vartheta = \theta \pi}} q^{\mathtt{cinv}\,\pi} \big) \, \vartheta \\
& = \sum_{\vartheta \in \mathbb{U}_m \wr \mathfrak{S}_I} \langle 0|\, a_{\vartheta(n)} \dots a_{\vartheta(1)} \, a_{\theta(1)}^{\dag} \dots a_{\theta(n)}^{\dag} \,|0 \rangle \, \vartheta.
\end{align*}
\end{proof}

\section{The Determinant of $\mathbf{M}_{[n]}$}  \label{SecD}

\noindent We compute the determinant and the inverse of the regular representation of $\sum_{\pi \in \mathbb{U}_m \wr \mathfrak{S}_n} q^{\mathtt{cinv}\,\pi} \pi$.

\smallskip

\noindent Consider the subgroup $\mathfrak{C}_n$ of $\mathbb{U}_m \wr \mathfrak{S}_n$ defined by
$$\mathfrak{C}_n := \big\{\pi = (\sigma, \alpha) \in \mathbb{U}_m \wr \mathfrak{S}_n \ |\ \forall i \in [n],\, \sigma(i)=i\big\}.$$ For $i \in [n]$, let $\xi_i$ be the colored permutation $\left( \begin{array}{cccccc} 1 & 2 & \dots & i & \dots & n \\ (1,m) & (2,m) & \dots & (i,1) & \dots & (n,m) \end{array} \right)$ in $\mathfrak{C}_n$. We need the following lemma.

\begin{lemma} \label{color}
We have $$\det R_{\mathbb{R}(q)[\mathbb{U}_m \wr \mathfrak{S}_n]}\Big( \sum_{\xi \in \mathfrak{C}_n} q^{\mathtt{cinv}\,\xi} \xi \Big) = \Big( \big(1 + (m-1)q \big) \big(1 - q\big)^{m-1} \Big)^{m^nn!}.$$
\end{lemma}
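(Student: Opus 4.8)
\noindent The plan is to push the whole computation down onto the abelian subgroup $\mathfrak{C}_n$ and then re-expand. Since every $\xi=(\sigma,\alpha)\in\mathfrak{C}_n$ has $\sigma=\mathrm{id}$, the inversion term of $\mathtt{cinv}$ vanishes and $\mathtt{cinv}\,\xi=\#\{i\in[n]\ |\ \alpha(i)\neq m\}$; in particular $\sum_{\xi\in\mathfrak{C}_n}q^{\mathtt{cinv}\,\xi}\xi$ lies in the subalgebra $\mathbb{R}(q)[\mathfrak{C}_n]$. First I would apply Lemma \ref{coset} with $G=\mathbb{U}_m\wr\mathfrak{S}_n$ and $H=\mathfrak{C}_n$. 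As $|G:H|=m^nn!/m^n=n!$, this gives
$$\det R_{\mathbb{R}(q)[\mathbb{U}_m\wr\mathfrak{S}_n]}\Big(\sum_{\xi\in\mathfrak{C}_n}q^{\mathtt{cinv}\,\xi}\xi\Big)=\Big(\det R_{\mathbb{R}(q)[\mathfrak{C}_n]}\big(\textstyle\sum_{\xi\in\mathfrak{C}_n}q^{\mathtt{cinv}\,\xi}\xi\big)\Big)^{n!},$$
reducing everything to the regular representation of the colored sum over $\mathfrak{C}_n$ itself.

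\noindent Next I would use the product structure $\mathfrak{C}_n\cong(\mathbb{U}_m)^n$, where the $i$-th factor is generated by $\xi_i$. Because $\mathtt{cinv}$ is additive in the coordinates of $\alpha$, the colored sum factors into commuting slot-wise pieces,
$$\sum_{\xi\in\mathfrak{C}_n}q^{\mathtt{cinv}\,\xi}\xi=\prod_{i=1}^n\Big(1+q\sum_{k\in[m-1]}\xi_i^{\,k}\Big),$$
each factor being a copy of the element $1+z\sum_{k\in[m-1]}\gamma^k$ of Lemma \ref{detcy} specialized at $z=q$. Equivalently, under $\mathbb{R}(q)[\mathfrak{C}_n]\cong\mathbb{R}(q)[\mathbb{U}_m]^{\otimes n}$ this element is the $n$-th tensor power of $1+q\sum_{k\in[m-1]}\gamma^k$, so its regular representation is the $n$-th Kronecker power of the single $m\times m$ circulant analyzed in Lemma \ref{detcy}.

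\noindent Then I would evaluate the determinant of that Kronecker power spectrally. From the proof of Lemma \ref{detcy}, the circulant $R_{\mathbb{R}(q)[\mathbb{U}_m]}(1+q\sum_k\gamma^k)$ has eigenvalue $1+(m-1)q$ once and eigenvalue $1-q$ with multiplicity $m-1$; the eigenvalues of its $n$-th Kronecker power are the $m^n$ length-$n$ products of these, indexed by the characters of $(\mathbb{U}_m)^n$. So $\det R_{\mathbb{R}(q)[\mathfrak{C}_n]}$ equals $1+(m-1)q$ and $1-q$ each raised to its total multiplicity over all $m^n$ product-eigenvalues, after which I raise the outcome to the power $n!$ from the first step.

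\noindent The main obstacle is exactly this multiplicity bookkeeping. Counting, over the $m^n$ characters, how often a fixed factor occurs leads to sums such as $\sum_{a=0}^n a\binom{n}{a}(m-1)^{n-a}=nm^{n-1}$, so that $\det R_{\mathbb{R}(q)[\mathfrak{C}_n]}\big(\sum_{\xi}q^{\mathtt{cinv}\,\xi}\xi\big)=\big((1+(m-1)q)(1-q)^{m-1}\big)^{nm^{n-1}}$ and, after the coset step, the overall exponent of $(1+(m-1)q)(1-q)^{m-1}$ reads $nm^{n-1}n!$. This is where I would double-check against the stated exponent $m^nn!$: the two already diverge at $n=1$, where $G=\mathbb{U}_m$, $\mathfrak{C}_1=\mathbb{U}_m$, and Lemma \ref{detcy} yields $\det=(1+(m-1)q)(1-q)^{m-1}$ directly, i.e. exponent $1=1\cdot m^0\cdot1!$ rather than $m$. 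Pinning down this exponent, and reconciling it with the claim, is the only genuinely delicate point; the factorization and the per-slot spectrum are forced.
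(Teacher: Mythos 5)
Your proof is correct, and the discrepancy you flag at the end is not something to ``reconcile'': the exponent in the statement of Lemma \ref{color} is wrong, and your value $n\,m^{n-1}n!$ is the right one. Your route is essentially the paper's own computation run in the opposite order. The paper factors
$$\sum_{\xi \in \mathfrak{C}_n} q^{\mathtt{cinv}\,\xi}\, \xi \;=\; \prod_{i \in [n]} \Big(1 + q \sum_{k \in [m-1]} \xi_i^k \Big)$$
and applies Lemma \ref{coset} to each cyclic subgroup $\langle \xi_i \rangle$, of index $m^{n-1}n!$ in $\mathbb{U}_m \wr \mathfrak{S}_n$, together with Lemma \ref{detcy}, obtaining for each fixed $i$
$$\det R_{\mathbb{R}(q)[\mathbb{U}_m \wr \mathfrak{S}_n]}\Big(1 + q \sum_{k \in [m-1]} \xi_i^k\Big) \;=\; \Big( \big(1 + (m-1)q \big) \big(1 - q\big)^{m-1} \Big)^{m^{n-1}n!}.$$
Multiplying these $n$ equal determinants gives the exponent $n\,m^{n-1}n!$ --- exactly what you get by applying Lemma \ref{coset} once with $H=\mathfrak{C}_n$ (index $n!$) and then using $\det(A^{\otimes n})=(\det A)^{n m^{n-1}}$ for the $m\times m$ circulant $A$; your multiplicity sum $\sum_a a\binom{n}{a}(m-1)^{n-a}=nm^{n-1}$ is correct. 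The stated exponent $m^nn!$ agrees with $n\,m^{n-1}n!$ only when $n=m$ (which is why small checks like $n=m=2$ fail to detect the slip), and your $n=1$ test already refutes it. A degree count refutes it for every $m>n$: the matrix $R_{\mathbb{R}(q)[\mathbb{U}_m \wr \mathfrak{S}_n]}\big(\sum_{\xi \in \mathfrak{C}_n} q^{\mathtt{cinv}\,\xi}\xi\big)$ has size $m^nn!$ and entries of degree at most $\max_{\xi\in\mathfrak{C}_n}\mathtt{cinv}\,\xi = n$, so its determinant has degree at most $n\,m^nn!$, whereas the stated right-hand side has degree $m^{n+1}n!$. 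So the error lies in the paper's implicit final step (combining the $n$ factors), not in your bookkeeping.

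It is worth noting the downstream effect. The same wrong exponent is carried into the theorem of Section \ref{SecD}, which should read
$$\det \mathbf{M}_{[n]} \;=\; \Big( \big(1 + (m-1)q\big) (1 - q)^{m-1}\Big)^{n m^{n-1} n!}\ \prod_{i=1}^{n-1} \big(1- q^{i^2+i}\big)^{\frac{(n-i)\,m^n n!}{i^2+i}},$$
the Zagier part (lifted through Lemma \ref{coset} with index $m^n$) being unaffected. For instance, for the displayed matrix $\mathbf{M}_{[2]}$ with $m=3$, the paper's formula has degree $72$, exceeding the obvious bound $18\cdot 3=54$ coming from the entries, while the corrected formula has degree exactly $54$. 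Since the correction changes only positive integer exponents and not the set of roots of $\det\mathbf{M}_{[n]}$, the nonsingularity interval for $q$, the positive-definiteness argument, and hence Theorem \ref{ThMa} all survive intact.
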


\begin{proof}
Remark that $$\sum_{\xi \in \mathfrak{C}_n} q^{\mathtt{cinv}\,\xi} \xi = \prod_{i \in [n]} \big(1 + q \sum_{k \in [m-1]} \xi_i^k \big).$$
Then, using Lemma \ref{coset} and Lemma \ref{detcy}, we obtain
$$\det R_{\mathbb{R}(q)[\mathbb{U}_m \wr \mathfrak{S}_n]}\big(1 + q \sum_{k \in [m]} \xi_i^k\big) = \Big( \big(1 + (m-1)q \big) \big(1 - q\big)^{m-1} \Big)^{m^{n-1}n!}.$$
\end{proof}

\noindent Now we can compute the determinant of $\sum_{\pi \in \mathbb{U}_m \wr \mathfrak{S}_n} q^{\mathtt{cinv}\,\pi} \pi$.

\begin{theorem}
We have $$\det R_{\mathbb{R}(q)[\mathbb{U}_m \wr \mathfrak{S}_n]}\Big(\sum_{\pi \in \mathbb{U}_m \wr \mathfrak{S}_n} q^{\mathtt{cinv}\,\pi} \pi\Big) = \Big( \big(1 + (m-1)q\big) (1 - q)^{m-1} \prod_{i=1}^{n-1} (1- q^{i^2+i})^{\frac{(n-i)}{(i^2+i)}} \Big)^{m^nn!}.$$
\end{theorem}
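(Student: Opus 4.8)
The plan is to exploit the semidirect‑product structure $\mathbb{U}_m \wr \mathfrak{S}_n = \mathbb{U}_m^{\,n} \rtimes \mathfrak{S}_n$ together with the fact that $\mathtt{cinv}$ splits as a permutation part plus a color part. Writing $\mathbf{0}$ for the trivial coloring (all colors equal to $m$), the multiplication rule recalled before the proposition gives $(\sigma, \mathbf{0}) \cdot (\mathrm{id}, \alpha) = (\sigma, \alpha)$, so that every $\pi = (\sigma, \alpha)$ factors uniquely as a pure permutation times a pure color. Since $\mathtt{cinv}(\sigma, \mathbf{0}) = \#\{(i,j): i<j,\ \sigma(i)>\sigma(j)\}$ and $\mathtt{cinv}(\mathrm{id}, \alpha) = \#\{i: \alpha(i) \neq m\}$, we have $q^{\mathtt{cinv}\,\pi} = q^{\mathtt{cinv}(\sigma,\mathbf{0})}\, q^{\mathtt{cinv}(\mathrm{id},\alpha)}$, and therefore the element whose determinant we want factors in the group algebra as
$$\sum_{\pi \in \mathbb{U}_m \wr \mathfrak{S}_n} q^{\mathtt{cinv}\,\pi}\, \pi \ = \ S_n \cdot C_n, \qquad S_n := \sum_{\sigma \in \mathfrak{S}_n} q^{\mathtt{cinv}(\sigma,\mathbf{0})} (\sigma, \mathbf{0}), \quad C_n := \sum_{\xi \in \mathfrak{C}_n} q^{\mathtt{cinv}\,\xi}\, \xi.$$
I would first verify this identity carefully, checking that the decomposition is a bijection onto $\mathbb{U}_m \wr \mathfrak{S}_n$ and that the exponents add correctly.

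\smallskip

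\noindent The regular representation $R$ sends a product to the product of the matrices in reversed order (an anti‑homomorphism), but determinants still multiply, so
$$\det R_{\mathbb{R}(q)[\mathbb{U}_m \wr \mathfrak{S}_n]}\Big(\sum_{\pi} q^{\mathtt{cinv}\,\pi}\pi\Big) = \det R_{\mathbb{R}(q)[\mathbb{U}_m \wr \mathfrak{S}_n]}(S_n) \cdot \det R_{\mathbb{R}(q)[\mathbb{U}_m \wr \mathfrak{S}_n]}(C_n).$$
The color factor is exactly Lemma \ref{color}, giving $\det R(C_n) = \big((1+(m-1)q)(1-q)^{m-1}\big)^{m^n n!}$. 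For the permutation factor, $S_n$ lies in the group algebra of the subgroup of pure permutations $\{(\sigma, \mathbf{0})\} \cong \mathfrak{S}_n$, which has index $[\mathbb{U}_m \wr \mathfrak{S}_n : \mathfrak{S}_n] = m^n$. Lemma \ref{coset} then yields
$$\det R_{\mathbb{R}(q)[\mathbb{U}_m \wr \mathfrak{S}_n]}(S_n) = \big(\det R_{\mathbb{R}(q)[\mathfrak{S}_n]}(S_n)\big)^{m^n}.$$

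\smallskip

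\noindent Under the isomorphism $(\sigma, \mathbf{0}) \mapsto \sigma$, the element $S_n$ becomes $\sum_{\sigma \in \mathfrak{S}_n} q^{\mathrm{inv}\,\sigma}\,\sigma$, whose regular representation is the matrix $\big(q^{\mathrm{inv}(\tau^{-1}\sigma)}\big)_{\sigma,\tau}$ — precisely the matrix studied by Zagier \cite{Za}. Invoking his determinant evaluation gives $\det R_{\mathbb{R}(q)[\mathfrak{S}_n]}(S_n) = \prod_{i=1}^{n-1}(1-q^{i^2+i})^{(n-i)n!/(i^2+i)}$, hence $\det R(S_n) = \big(\prod_{i=1}^{n-1}(1-q^{i^2+i})^{(n-i)/(i^2+i)}\big)^{m^n n!}$. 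Multiplying this by $\det R(C_n)$ produces the asserted formula, since both factors carry the common exponent $m^n n!$.

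\smallskip

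\noindent The genuine analytic content — and thus the main obstacle — is the symmetric‑group determinant of Zagier, which is the case $m=1$ of the theorem and itself rests on a delicate factorization of $\sum_\sigma q^{\mathrm{inv}\,\sigma}\,\sigma$ in $\mathbb{R}(q)[\mathfrak{S}_n]$ that is responsible for the exponents $i^2+i$. Everything specific to the colored setting, by contrast, is purely the group‑algebra bookkeeping already packaged by Lemma \ref{coset} and Lemma \ref{color}; once the splitting $\mathtt{cinv} = \mathrm{inv} + (\text{color count})$ is in hand, the colored determinant is obtained from Zagier's by adjoining the single circulant factor $(1+(m-1)q)(1-q)^{m-1}$ coming from each coordinate of $\mathbb{U}_m^{\,n}$.
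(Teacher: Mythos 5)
Your proposal is correct and follows essentially the same route as the paper: decompose each $\pi$ as a pure permutation times a pure coloring so that $\sum_\pi q^{\mathtt{cinv}\,\pi}\pi$ factors as $\big(\sum_\sigma q^{\mathtt{cinv}\,\sigma}\sigma\big)\big(\sum_{\xi\in\mathfrak{C}_n} q^{\mathtt{cinv}\,\xi}\xi\big)$, then evaluate the color factor by Lemma \ref{color} and the permutation factor by Zagier's determinant lifted through Lemma \ref{coset}. Your write-up is merely more explicit about the bijectivity of the decomposition and the exponent bookkeeping.
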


\begin{proof}
Every $\pi \in \mathbb{U}_m \wr \mathfrak{S}_n$ has a decomposition $\pi = \sigma \xi$ such that
$$\sigma \in \mathfrak{S}_n,\, \xi \in \mathfrak{C}_n,\ \text{and}\ \mathtt{cinv}\,\pi = \mathtt{cinv}\,\sigma + \mathtt{cinv}\,\xi.$$
Then, $$\sum_{\pi \in \mathbb{U}_m \wr \mathfrak{S}_n} q^{\mathtt{cinv}\,\pi} \pi =
\Big( \sum_{\sigma \in \mathfrak{S}_n} q^{\mathtt{cinv}\,\sigma} \sigma \Big)
\Big( \sum_{\xi \in \mathfrak{C}_n} q^{\mathtt{cinv}\,\xi} \xi \Big).$$
It is known that \cite[Theorem 2]{Za}
$$\det R_{\mathbb{R}(q)[\mathfrak{S}_n]}\Big(\sum_{\sigma \in \mathfrak{S}_n} q^{\mathtt{cinv}\,\sigma} \sigma\Big) = \prod_{i=1}^{n-1} (1- q^{i^2+i})^{\frac{(n-i)n!}{(i^2+i)}}.$$
We finally obtain the result by using Lemma \ref{coset} and Lemma \ref{color}.
\end{proof}

\noindent For $k \in [n]$, denote by $t_{k,n}$ the permutation $(n\ n-1\ \dots \ k)$ in cycle notation. Let $$\gamma_n = \prod_{k \in [n-1]}^{\rightarrow} 1 - q^{n-k}t_{k,n} \quad \text{and} \quad
\varepsilon_n = \prod_{k \in [n]}^{\leftarrow} \frac{\sum_{i=0}^{n-k} q^{(n-k+2)i}\, t_{k,n}^i}{1-q^{(n-k+1)(n-k+2)}}.$$
Furthermore, let $$\rho_k = \frac{1 +(m-2)q - q \sum_{i \in [m-1]} \xi_k^i}{\big(1 + (m-1)q\big)(1-q)}.$$

\noindent We finish with the inverse of $\sum_{\pi \in \mathbb{U}_m \wr \mathfrak{S}_n} q^{\mathtt{cinv}\,\pi} \pi$.

\begin{proposition}
We have $$\Big(\sum_{\pi \in \mathbb{U}_m \wr \mathfrak{S}_n} q^{\mathtt{cinv}\,\pi} \pi\Big)^{-1} = \prod_{i \in [n]} \rho_i \cdot \prod_{i \in [n-1]}^{\leftarrow} \gamma_{i+1} \varepsilon_i.$$
\end{proposition}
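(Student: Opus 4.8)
The plan is to exploit the factorization recorded in the preceding theorem,
$$\sum_{\pi \in \mathbb{U}_m \wr \mathfrak{S}_n} q^{\mathtt{cinv}\,\pi} \pi = S_n\, C_n, \quad\text{with}\quad S_n := \sum_{\sigma \in \mathfrak{S}_n} q^{\mathtt{cinv}\,\sigma} \sigma \ \text{ and }\ C_n := \sum_{\xi \in \mathfrak{C}_n} q^{\mathtt{cinv}\,\xi} \xi .$$
Both factors are invertible, since the determinant of $S_n$ is nonzero by \cite[Theorem 2]{Za} and that of $C_n$ is nonzero by Lemma \ref{color}. Hence the desired inverse is $C_n^{-1} S_n^{-1}$, and it suffices to establish the two separate identities $C_n^{-1} = \prod_{i \in [n]} \rho_i$ and $S_n^{-1} = \prod_{i \in [n-1]}^{\leftarrow} \gamma_{i+1} \varepsilon_i$, whose product in this order reproduces the claimed formula.

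For the color factor, Lemma \ref{color} gives $C_n = \prod_{i \in [n]} \big(1 + q \sum_{k \in [m-1]} \xi_i^k\big)$, and these factors commute because $\mathfrak{C}_n$ is abelian, each $\xi_i$ generating an independent copy of $Z_m$. Applying Lemma \ref{xiinv} with $z = q$ and $\gamma = \xi_i$ inverts each factor and returns precisely $\rho_i$, so $C_n^{-1} = \prod_{i \in [n]} \rho_i$, independently of the order of the commuting factors.

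For the permutation factor, I would first record the coset factorization $S_n = S_{n-1} R_n$ with $R_n := \sum_{k=1}^n q^{n-k} t_{k,n}$: the cycles $t_{k,n}$ are the minimal-length representatives of the right cosets of $\mathfrak{S}_{n-1}$ in $\mathfrak{S}_n$, with $\mathtt{cinv}\,t_{k,n} = n-k$, so that $\mathtt{cinv}$ is additive along any factorization $\sigma = w\, t_{k,n}$ with $w \in \mathfrak{S}_{n-1}$. Iterating gives $S_n = R_2 R_3 \cdots R_n$, hence $S_n^{-1} = R_n^{-1} \cdots R_2^{-1}$, which matches the claimed leftward product once one proves $R_j^{-1} = \gamma_j \varepsilon_{j-1}$ for every $j$. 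Applying Lemma \ref{gainv} to each cycle $t_{k,n-1}$, of order $n-k$, with $z = q^{n-k+1}$, rewrites $\varepsilon_{n-1} = \prod_{k \in [n-1]}^{\leftarrow} (1 - q^{n-k+1} t_{k,n-1})^{-1}$, so that the statement $R_n^{-1} = \gamma_n \varepsilon_{n-1}$ is equivalent to the rank-reduction identity
$$R_n\, \gamma_n = \varepsilon_{n-1}^{-1} = \prod_{k \in [n-1]}^{\rightarrow} \big(1 - q^{n-k+1} t_{k,n-1}\big),$$
from whose right-hand side every cycle moving $n$ has disappeared.

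Proving this last identity is where the real work lies, and it is the step I expect to be the main obstacle. The plan is to induct on $n$, using the recursion $R_n = 1 + q\, s_{n-1} R_{n-1}$, where $s_{n-1} = (n-1\ n)$, which follows from the decomposition $t_{k,n} = s_{n-1} t_{k,n-1}$ valid for $k \le n-1$. Multiplying $R_n$ successively by the factors $(1 - q^{n-k} t_{k,n})$ of $\gamma_n$ produces a telescoping cascade in which the terms carrying a cycle through $n$ cancel in pairs, leaving only the lower-rank cycles $t_{k,n-1}$ with their $q$-exponents shifted up by one. The delicate part is to verify these cancellations exactly, which requires controlling the products $t_{k,n} t_{j,n}$ of the coset cycles at each stage; a direct check confirms the pattern for $n = 2$ and $n = 3$. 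Finally, since each $R_j$ is invertible, the right inverse produced in this way is the genuine two-sided inverse, which completes the argument.
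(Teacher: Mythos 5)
Your proposal follows the same skeleton as the paper's proof: factor $\sum_{\pi} q^{\mathtt{cinv}\,\pi}\pi = S_nC_n$ via the preceding theorem, observe $(S_nC_n)^{-1} = C_n^{-1}S_n^{-1}$, and invert the two factors separately. Your treatment of the color factor is complete and is exactly the paper's: since $C_n = \prod_{i\in[n]}\big(1 + q\sum_{k\in[m-1]}\xi_i^k\big)$ with commuting factors, Lemma~\ref{xiinv} applied with $z=q$ and $\gamma=\xi_i$ gives $C_n^{-1} = \prod_{i\in[n]}\rho_i$. Your reduction of the permutation factor is also correct as far as it goes: the coset factorization $S_n = R_2R_3\cdots R_n$ with $R_j = \sum_{k\in[j]}q^{j-k}t_{k,j}$ holds because inversions are additive along $\sigma = w\,t_{k,j}$, $w\in\mathfrak{S}_{j-1}$, and Lemma~\ref{gainv} does convert the claim $R_j^{-1} = \gamma_j\varepsilon_{j-1}$ into the identity $R_j\,\gamma_j = \prod_{k\in[j-1]}^{\rightarrow}\big(1 - q^{j-k+1}t_{k,j-1}\big)$.

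That identity, however, is the entire difficulty of inverting $S_n$, and your proposal does not prove it: you outline a hoped-for telescoping cancellation, explicitly flag it as the main obstacle, and verify only $n=2,3$. The products $t_{k,n}t_{j,n}$ do not cancel in pairs without careful inductive bookkeeping, and nothing in your text pins this down; as written, the formula $S_n^{-1} = \prod_{i\in[n-1]}^{\leftarrow}\gamma_{i+1}\varepsilon_i$ rests on a conjecture supported by small cases. The missing identity is precisely \cite[Proposition 2]{Za}, which is exactly what the paper invokes at this point (combined with Lemma~\ref{gainv} to turn Zagier's geometric-series factors into the $\varepsilon_i$'s); the paper never reproves it. So either cite \cite[Proposition 2]{Za} to close the gap --- after which your write-up becomes a slightly more detailed version of the paper's proof --- or genuinely carry out the cancellation argument, which amounts to redoing Zagier's computation rather than sketching it.
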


\begin{proof}
We obtain $\Big( \sum_{\xi \in \mathfrak{C}_n} q^{\mathtt{cinv}\,\xi} \xi \Big)^{-1} = \prod_{i \in [n]} \rho_i$ by means of Lemma \ref{xiinv}.\\ Then \cite[Proposition 2]{Za} and Lemma \ref{gainv} permit us to write $\Big( \sum_{\sigma \in \mathfrak{S}_n} q^{\mathtt{cinv}\,\sigma} \sigma \Big)^{-1} = \prod_{i \in [n-1]}^{\leftarrow} \gamma_{i+1} \varepsilon_i$.
\end{proof}

\bibliographystyle{abbrvnat}

\end{document}